\theoremstyle{plain}
\newtheorem{corollary}{Corollary}
\newtheorem{definition}{Definition}
\newtheorem{proposition}{Proposition}
\newtheorem{remark}{Remark}
\newtheorem{theorem}{Theorem}
\numberwithin{equation}{section}
\begin{document}
\title[The affine ensemble]{The affine ensemble: determinantal point
processes associated with the $ax+b$ group}
\author{Lu\'{\i}s Daniel Abreu}
\email{abreuluisdaniel@gmail.com}
\address{NuHAG, Faculty of Mathematics, University of Vienna,
Oskar-Morgenstern-Platz 1, A-1090, Vienna, Austria}
\author{Peter Balazs}
\email{peter.balazs@oeaw.ac.at}
\address{Acoustics Research Institute, Vienna 1040, Austria}
\author{Smiljana Jak\v si\'c}
\email{smiljana.jaksic@sfb.bg.ac.rs}
\address{Faculty of Forestry, University of Belgrade, Kneza Vi seslava 1,
11000, Belgrade, Serbia}
\subjclass{}
\keywords{Determinantal point processes, hyperbolic half plane, affine group}
\thanks{This work was supported by the Austrian ministry BMBWF through the
WTZ/OeAD-projects SRB 01/2018 "ANACRES - Analysis and Acoustics Research"
and MULT 10/2020 "Time-Frequency representations for function spaces -
Tireftus and FWF project `Operators and Time-Frequency Analysis' P 31225-N32.%
}

\begin{abstract}
We introduce the affine ensemble, a class of determinantal point processes
(DPP) in the half-plane $\mathbb{C}^{+}$ associated with the $ax+b$ (affine)
group, depending on an admissible Hardy function $\psi $. We obtain the
asymptotic behavior of the variance, the exact value of the asymptotic
constant, and non-asymptotic upper and lower bounds for the variance on a
compact set $\Omega \subset $ $\mathbb{C}^{+}$. As a special case one
recovers the DPP related to the weighted Bergman kernel. When $\psi $ is
chosen within a finite family whose Fourier transform are Laguerre
functions, we obtain the DPP associated to hyperbolic Landau levels, the
eigenspaces of the finite spectrum of the Maass Laplacian with a magnetic
field.
\end{abstract}

\maketitle

\section{Introduction}

Determinantal point processes (DPPs) are random point distributions with
negative correlations between points determined by the reproducing kernel of
some Hilbert space, usually called \emph{the correlation kernel}. Because of
the repulsion inherent of the model, DPPs are convenient to describe
physical systems with charged-liked particles, to distribute random
sequences of points in selected regions while avoiding clustering, to
promote diversity in selection algorithms for machine learning \cite{ML}, or
to improve the rate of convergence in Monte Carlo methods \cite{BH}. DPPs
have been introduced by Odile Macchi to model fermion distributions \cite%
{Macchi}.

In this paper we introduce and study some aspects of \emph{the affine
ensemble,} a family of determinantal point processes on the complex upper
half-plane $\mathbb{C}^{+}$, defined in terms of a representation of the $%
ax+b$ group acting on a vector $\psi \in H^{2}(\mathbb{C}^{+})$, the Hardy
space in the upper half plane. This can be seen as a geometric hyperbolic or
algebraic non-unimodular analogue of the Weyl-Heisenberg ensemble \cite%
{abgrro17,APRT}, a family of planar euclidean determinantal point processes
associated with the (unimodular) Weyl-Heisenberg group. In terms of the
representation $\pi (z)\psi (t):=s^{-\frac{1}{2}}\psi (s^{-1}(t-x))$, the
kernel of the affine ensemble is defined for $\psi $ such that $\ \Vert \psi
\Vert _{2}=1$ and $\left\Vert \mathcal{F}\psi \right\Vert _{L^{2}(%
\mathbb{R}
^{+},t^{-1}dt)}<\infty $, as a normalizing constant times 
\begin{equation}
k_{\psi }(z,w)=\langle \pi (w)\psi ,\pi (z)\psi \rangle _{H^{2}\left( 
\mathbb{C}^{+}\right) }\text{.}  \label{Kg1}
\end{equation}%
Using the Fourier transform isomorphism $\mathcal{F}:H^{2}(\mathbb{C}%
^{+})\rightarrow L^{2}(0,\infty )$, the kernel (\ref{Kg1}) can be written in
a more convenient form, for $z=x+is,w=x^{\prime }+is^{\prime }\in \mathbb{C}%
^{+}$, 
\begin{equation}
k_{\psi }(z,w)=(ss^{\prime })^{\frac{1}{2}}\int_{0}^{\infty }e^{-ix^{\prime
}\xi }(\mathcal{F}\psi )(s^{\prime }\xi )\overline{e^{-ix\xi }(\mathcal{F}%
\psi )(s\xi )}d\xi \text{.}  \label{Kg2}
\end{equation}

By selecting special functions $\psi $, a number of processes arise as
special cases, automatically inheriting all properties of the affine
ensemble. In this paper we will consider only examples with $PSL(2,\mathbb{R}%
)$ invariance, corresponding to invariance under the linear fractional
transformations of the half plane $\mathbb{C}^{+}$ (dilations, rotations and
translations). Consider the mother wavelets chosen from the family $\{\psi
_{n}^{\alpha }\}_{n\in \mathbb{N}_{0}}$, $\alpha >0$, 
\begin{equation}
(\mathcal{F}\psi _{n}^{\alpha })(\xi ):=\xi ^{\frac{\alpha }{2}}e^{-\xi
}L_{n}^{\alpha }(2\xi ),\quad \xi >0\text{,}  \label{mother}
\end{equation}%
where $L_{n}^{\alpha }$ denotes the generalized Laguerre polynomials 
\begin{equation*}
L_{n}^{\alpha }(t)=\frac{t^{-\alpha }e^{t}}{n!}\left( \frac{d}{dt}\right)
^{n}(e^{-t}t^{\alpha +n}),\quad t>0\text{.}
\end{equation*}%
and the following projective unitary group representation of $PSL(2,\mathbb{R%
})$ on $L^{2}(\mathbb{C}^{+},\mu )$:%
\begin{equation}
\widehat{\tau }_{n}^{\alpha }\left( \frac{az+b}{cz+d}\right) F(z):=\left( 
\frac{|cz+d|}{cz+d}\right) ^{2n+\alpha +1}F\left( \frac{az+b}{cz+d}\right) 
\text{,}  \label{eq:tau-n-widehat}
\end{equation}%
where \ $a,b,c,d$ are real numbers such that $\det \left[ 
\begin{array}{cc}
a & b \\ 
c & d%
\end{array}%
\right] \neq 0$.\ Then $\widehat{\tau }_{n}^{\alpha }$ leaves the Hilbert
space with reproducing kernel $k_{\psi _{n}^{\alpha }}(z,w)$ invariant. This
has been shown in \cite{Affine}$\ $and also that, essentially, the choice (%
\ref{mother}) is the only leading to spaces invariant under representations
of the form \eqref{eq:tau-n-widehat}, if we assume mild reasonable
restrictions on $\psi $ (see Theorem 3 in \cite{Affine}). Moreover, for $%
\psi $ within the family (\ref{mother}) and the parameter $\alpha =2(B-n)-1$
we obtain reproducing kernels associated with the eigenspaces of the pure
point spectrum of the Maass Laplacian with weight $B$ \cite{Maass,Comtet,AoP}%
: 
\begin{equation*}
H_{B}:=s^{2}\left( \frac{\partial ^{2}}{\partial x^{2}}+\frac{\partial ^{2}}{%
\partial s^{2}}\right) -2iBs\frac{\partial }{\partial x}
\end{equation*}

The last part of this paper will be devoted to these special cases. The pure
point spectrum eigenspaces of the Maass Laplacian $H_{B}$ have been used in 
\cite{Comtet,AoP} to model the formation of higher Landau levels in the
hyperbolic plane. A physical model, put forward by Alain Comtet in \cite%
{Comtet}, describes a situation where the number of levels is constrained to
be a finite number, depending on the strength of the magnetic field $B$,
which must exceed a lower bound for their existence (the magnetic field has
to be strong enough to capture the particle in a closed orbit). The
connection to analytic wavelets was suggested by the characterization of
hyperbolic Landau coherent states \cite{Mouayn} and has been implicit used
in \cite{AoP} and more recently in \cite{Affine}.

It is reasonable to expect interesting examples arising from other special
choices, namely those leading to the polyanalytic structure discovered by
Vasilevski \cite{VasiBergman} (see \cite{SF,Hutnik,IEOT} for the special
choices leading to polyanalytic spaces) but we will not explore this
direction in the present paper. One of our motivations for this research was
the scarceness of examples on hyperbolic DPPs. Besides the celebrated case
studied by Peres and Vir\'{a}g \cite{PeresVirag}, we only found the higher
Landau levels DPP on the disc studied recently by Demni and Lazag in \cite%
{HyperbolicDPP}, which strongly influenced the current paper. The affine
ensemble contains an uncountable number of examples as special cases, of
which we only explore a very few.

The paper is organized as follows. The next section contains the required
background on analytic wavelets and hyperbolic geometry. The third section
is the core of the paper, where the affine ensemble is defined and the main
results are proved. Section 4 specializes the results to the class of
Maass-Landau ensembles and the calculations are detailed in the last section
of the paper, as an appendix.

\section{Background}

\subsection{The continuous analytic wavelet transform}

We will use the basic notation for $H^{2}(%
\mathbb{C}
^{+})$, the Hardy space in the upper half plane, of analytic functions in $%
\mathbb{C}
^{+}$ with the norm 
\begin{equation*}
\left\Vert f\right\Vert _{H^{2}(%
\mathbb{C}
^{+})}=\text{ }\sup_{0<s<\infty }\int_{-\infty }^{\infty }\left\vert
f(x+is)\right\vert ^{2}dx<\infty \text{.}
\end{equation*}%
To simplify the computations it is often convenient to use the equivalent
definition (since the Paley-Wiener theorem \cite{DGM} gives $\mathcal{F}%
(H^{2}(\mathbb{C}^{+}))=L^{2}(0,\infty )$) 
\begin{equation*}
H^{2}(\mathbb{C}^{+})=\left\{ f\in L^{2}(\mathbb{R}):(\mathcal{F}f)(\xi )=0%
\text{ for almost all }\xi <0\right\} \text{.}
\end{equation*}%
Consider the $ax+b$ group (see \cite[Chapter 10]{Charly} for the listed
properties) $G\sim \mathbb{R}\times \mathbb{R}^{+}\sim $ $\mathbb{C}^{+}$
with the multiplication%
\begin{equation*}
(x,s)\cdot (x^{\prime },s^{\prime })=(x+sx^{\prime },ss^{\prime })\text{.}
\end{equation*}%
The identification $G\sim $ $\mathbb{C}^{+}$\ is done by setting $(x,s)\sim
x+is$. The neutral element of the group is $(0,1)\sim i$ and the inverse
element is given by $(x,s)^{-1}=(-\frac{x}{s},\frac{1}{s})\equiv -\frac{x}{s}%
+\frac{i}{s}$. The $ax+b$ group is not\ unimodular, since the left Haar
measure on $G$ is $\frac{dxds}{s^{2}}\,\ $and the right Haar measure $G$ is $%
\frac{dxds}{s}$. The left Haar measure of a set $\Omega \subseteq G$, 
\begin{equation*}
|\Omega |=\int_{\Omega }\frac{dxds}{s^{2}}\text{, }
\end{equation*}%
coincides, under the identification of the $ax+b$ group with $\mathbb{C}^{+}$%
, with the hyperbolic measure%
\begin{equation*}
|\Omega |=\left\vert \Omega \right\vert _{h}:=\int_{\Omega }s^{-2}\,d\mu _{%
\mathbb{C}^{+}}(z)\text{,}
\end{equation*}%
where $d\mu _{\mathbb{C}^{+}}(z)\ $is the Lesbegue measure in $\mathbb{C}^{+}
$. We will write 
\begin{equation}
d\mu ^{+}(z)=(\mathrm{Im}\,z)^{-2}d\mu _{\mathbb{C}^{+}}(z)\text{.}
\label{measure}
\end{equation}

For every $x\in \mathbb{R}$ and $s\in \mathbb{R}^{+}$, define the
translation $T_{x}$ by $T_{x}f(t)=f(t-x)$ and the dilation $D_{s}f(t)=\frac{1%
}{\sqrt{s}}f(t/s)$. Let $z=x+is\in 
\mathbb{C}
^{+}$ and define the representation, for $\psi \in H^{2}(\mathbb{C}^{+})$, 
\begin{equation}
\pi (z)\psi (t):=T_{x}D_{s}\psi (t)=s^{-\frac{1}{2}}\psi (s^{-1}(t-x))\text{.%
}  \label{representation}
\end{equation}%
{The theory of general wavelet transforms using group representations
requires the admissibility condition \cite{aliant1},%
\begin{equation*}
\int\limits_{G}\left\vert \left\langle \psi ,\pi (z)\psi \right\rangle
\right\vert ^{2}d\mu (z)<\infty \text{,}
\end{equation*}%
to construct square-integrable representations for general groups $G$ with
left Haar measures $\mu $. This will lead to an isometric transform thanks
to the orthogonality relations (\ref{ortogonalityrelations}) below. In} the
Weyl-Heisenberg representations used in \cite{abgrro17,APRT}, this follows
trivially from the square-integrability of $\psi ${\ only}. But here,{\ in
the affine case,} we need to take into account that the $ax+b$ group is not\
unimodular, and the different left and right Haar measures of the
representation require a further condition on the integrability of $\psi $,
which will be restricted to the class of functions such that 
\begin{equation}
\left\{ 
\begin{array}{c}
\psi \in H^{2}(\mathbb{C}^{+}) \\ 
0<2\pi \left\Vert \mathcal{F}\psi \right\Vert _{L^{2}(%
\mathbb{R}
^{+},t^{-1}dt)}^{2}=C_{\psi }<\infty 
\end{array}%
\right. \text{.}  \label{Adm_const}
\end{equation}%
Functions satisfying (\ref{Adm_const}) are called \emph{admissible }and the
constant $C_{\psi }$ is the \emph{admissibility constant}.{\ }Now, we have
an irreducible and unitary representation $\pi $ of the affine group on $%
H^{2}(\mathbb{C}^{+})$ \cite{aliant1}, defined in (\ref{representation}) for
an admissible $\psi $. {By this definition any admissible function is
automatically in the Hardy space and so the inner product considered for the
wavelet transform or the reproducing kernel is in $H^{2}(\mathbb{C}^{+})$. \ 
}\emph{The continuous analytic wavelet transform} of a function $f$ with
respect to a wavelet\ $\psi $ is defined, for every $z=x+is\in \mathbb{C}^{+}
$, as 
\begin{equation}
W_{\psi }f(z)=\left\langle f,\pi ({z})\psi \right\rangle _{H^{2}\left( 
\mathbb{C}^{+}\right) }\text{.}  \label{wavelet}
\end{equation}%
More explicitly, 
\begin{equation*}
W_{\psi }f(z)=\sup_{0<s<\infty }s^{-\frac{1}{2}}\int_{-\infty }^{\infty
}f(t)\psi (s^{-1}(t-x))dt\text{.}
\end{equation*}%
Using $\mathcal{F}(H^{2}(\mathbb{C}^{+}))=L^{2}(0,\infty )$, this can also
be written (and we will do it as a rule to simplify the calculations) as%
\begin{equation}
W_{\psi }f(z)=s^{\frac{1}{2}}\int_{0}^{\infty }f(\xi )e^{-ix\xi }(\mathcal{F}%
\psi )(s\xi )d\xi \text{.}  \label{anwav}
\end{equation}%
As proven recently in \cite{AnalyticWavelet}, $W_{\psi }f(z)$ only leads to
analytic (Bergman) phase spaces for a very special choice of $\psi $, but it
is common practice to call it in general continuous analytic wavelet
transform. The orthogonality relations 
\begin{equation}
\int_{\mathbb{C}^{+}}W_{\psi _{1}}f_{1}(x,s)\overline{W_{\psi _{2}}f_{2}(x,s)%
}d\mu ^{+}(z)=2\pi \left\langle \mathcal{F}\psi _{1},\mathcal{F}\psi
_{2}\right\rangle _{L^{2}(%
\mathbb{R}
^{+},t^{-1}dt)}\left\langle f_{1},f_{2}\right\rangle _{H^{2}\left( \mathbb{C}%
^{+}\right) }\text{,}  \label{ortogonalityrelations}
\end{equation}%
are valid for all $f_{1},f_{2}\in H^{2}\left( \mathbb{C}^{+}\right) $ and $%
\psi _{1},\psi _{2}\in H^{2}\left( \mathbb{C}^{+}\right) $ admissible. Then,
setting $\psi _{1}=\psi _{2}=\psi $\ and $f_{1}=f_{2}$\ in (\ref%
{ortogonalityrelations}), gives%
\begin{equation*}
\int_{\mathbb{C}^{+}}\left\vert W_{\psi }f(x,s)\right\vert ^{2}d\mu
^{+}(z)=C_{\psi }\left\Vert f\right\Vert _{H^{2}\left( \mathbb{C}^{+}\right)
}^{2}
\end{equation*}%
and the continuous wavelet transform provides an isometric inclusion $%
W_{\psi }:H^{2}\left( \mathbb{C}^{+}\right) \rightarrow L^{2}(\mathbb{C}^{+}%
\mathbf{,}d\mu ^{+})$. Setting $\psi _{1}=\psi _{2}=\psi $\ and $f_{2}=\pi
(z)\psi $ in (\ref{ortogonalityrelations}) then for every $f\in H^{2}\left( 
\mathbb{C}^{+}\right) $ one has 
\begin{equation}
W_{\psi }f(z)=\frac{1}{C_{\psi }}\int_{\mathbb{C}^{+}}W_{\psi }f(w)\langle
\pi (w)\psi ,\pi (z)\psi \rangle d\mu ^{+}(z),\quad z\in \mathbb{C}^{+}\text{%
.}  \label{eq:rep-eq}
\end{equation}%
Thus, the range of the wavelet transform 
\begin{equation*}
W_{\psi }\left( H^{2}\left( \mathbb{C}^{+}\right) \right) :=\{F\in L^{2}(%
\mathbb{C}^{+},\mu ^{+}):\ F=W_{\psi }f,\ f\in H^{2}\left( \mathbb{C}%
^{+}\right) \}
\end{equation*}%
is a reproducing kernel subspace of $L^{2}(\mathbb{C}^{+},\mu ^{+})$ with
kernel%
\begin{equation}
k_{\psi }(z,w)=\frac{1}{C_{\psi }}\langle \pi (w)\psi ,\pi (z)\psi \rangle
_{H^{2}\left( \mathbb{C}^{+}\right) }=\frac{1}{C_{\psi }}W_{\psi }\psi ({w}%
^{-1}.z),\quad \text{and}\quad k_{\psi }(z,z)=\frac{\Vert \psi \Vert _{2}^{2}%
}{C_{\psi }}\text{.}  \label{RepKern}
\end{equation}%
The Fourier transform $\mathcal{F}:H^{2}\left( \mathbb{C}^{+}\right)
\rightarrow L^{2}(0,\infty )$ can be used to simplify computations, since%
\begin{equation}
\left\langle \pi (w)\psi ,\pi (z)\psi \right\rangle _{H^{2}\left( \mathbb{C}%
^{+}\right) }=\left\langle \widehat{\pi (w)\psi },\widehat{\pi (z)\psi }%
\right\rangle _{L^{2}(\mathbb{R}^{+},dt)}=\left( ss^{\prime }\right) ^{\frac{%
1}{2}}\int_{0}^{\infty }\widehat{\psi }(s^{\prime }\xi )\overline{\widehat{%
\psi }\left( s\xi \right) }e^{i(x-x^{\prime })\xi }d\xi \text{.}
\label{kernelF}
\end{equation}

\subsection{Hyperbolic geometry}

We will need some elementary facts of hyperbolic geometry. The hyperbolic
metric in $\mathbb{C}^{+}$ is defined as \cite{Hmetric}%
\begin{equation}
d(z_{1},z_{2})=\log \frac{1+\varrho (z_{1},z_{2})}{1-\varrho (z_{1},z_{2})}%
=2\tanh ^{-1}\left( \varrho (z_{1},z_{2})\right) \text{,}  \label{HypMetr}
\end{equation}%
where $\varrho $ is the pseudohyperbolic metric in $\mathbb{C}^{+}$, 
\begin{equation*}
\varrho (z_{1},z_{2})=\left\vert \frac{z_{1}-z_{2}}{z_{1}-\overline{z_{2}}}%
\right\vert \text{.}
\end{equation*}%
The hyperbolic ball of center $z\in \mathbb{C}^{+}$ and radius $R<1$ is 
\begin{equation*}
D(z,R)=\left\{ w\in \mathbb{C}^{+}:d(w,z)<R\right\} \text{.}
\end{equation*}%
By direct computation it can be checked that $\varrho (z^{-1},i)=\varrho
(i,z)$\ and that $D(z^{-1},R)=D(z,R)$.

\section{The affine ensemble}

\subsection{The affine ensemble}

\emph{Determinantal Point Processes} (we simply list the concepts we are
using; for a complete definition see \cite[Chapter 4]{DetPointRand}) are
defined using an ambient space $\Lambda $, a Radon measure $\mu $ defined on 
$\Lambda $, and\ a reproducing kernel Hilbert space $\mathcal{H}$ contained
in $L^{2}(\mathbb{C}^{+}\mathbf{,}d\mu ^{+})$. The reproducing kernel of $%
\mathcal{H}$, $K\left( z,w\right) $, is the correlation kernel of the point
process $\mathcal{X}$. The $k$-point intensities are given by $\rho
_{k}(x_{1},...,x_{k})=\det \left( K(x_{i},x_{j})\right) _{1\leq i,j\leq k}$.
Given a set $\Omega \subset \mathbb{C}^{+}$, the $1$-point intensity gives
the expected number of points to be found in $\Omega $: 
\begin{equation*}
\mathbb{E}\left( \mathcal{X}(\Omega )\right) =\int_{\Omega }\rho _{1}\left(
z\right) d\mu ^{+}(z)=\int_{\Omega }K\left( z,z\right) d\mu ^{+}(z)\text{,}
\end{equation*}

The normalization of the kernel $K_{\psi }(z,w)$ in the following definition
is different from the one in (\ref{RepKern}) and is chosen so that $K_{\psi
}(z,z)=1$. Recall that $k_{\psi }(z,z)=\frac{\Vert \psi \Vert _{2}^{2}}{%
C_{\psi }}$.

\begin{definition}
The \textbf{affine} \textbf{ensemble }$\mathcal{X}_{\psi }$ associated with
an admissible function $\psi $ is the Determinantal Point Process with the
normalized correlation kernel 
\begin{equation}
K_{\psi }(z,w)=\frac{k_{\psi }(z,w)}{k_{\psi }(z,z)}=\frac{W_{\psi }\psi ({w}%
^{-1}\cdot z)}{\Vert \psi \Vert _{2}^{2}}=W_{\widetilde{\psi }}\widetilde{%
\psi }({w}^{-1}\cdot z)\text{,}  \label{affineKernel}
\end{equation}%
where $\widetilde{\psi }=\psi /\Vert \psi \Vert _{2}$.
\end{definition}

We will assume from now on $\Vert \psi \Vert _{2}=1$ (if this is not the
case, we will use the notation $\widetilde{\psi }=\psi /\Vert \psi \Vert
_{2} $). Then 
\begin{equation*}
K_{\psi }(z,w)=k_{\psi }(z,w)=W_{\psi }\psi ({w}^{-1}\cdot z)\text{, \ \ \ \
\ \ \ \ \ }K_{\psi }(z,z)=W_{\psi }\psi ({i})
\end{equation*}%
and, from (\ref{ortogonalityrelations}),%
\begin{equation}
\int_{\mathbb{C}^{+}}\left\vert W_{\psi }\psi ({w})\right\vert ^{2}d\mu
^{+}(w)=C_{\psi }\text{.}  \label{Cg}
\end{equation}

\subsection{Variance estimates}

We will consider an operator $T_{\Omega }$ acting on a function $f$ on the
range of $W_{\psi }$, which smooths out the energy of $f$ outside $\Omega $,
by first multiplication by $\mathbf{1}_{\Omega }$ and then projecting on the
range of $W_{\psi }$. Using the reproducing kernel property, $T_{\Omega }$
can be written as 
\begin{eqnarray}
(T_{\Omega }f)(z) &=&\int_{\Omega }f(w)K_{\psi }(z,w)d\mu ^{+}(w)
\label{Toeplitz} \\
&=&\int_{\Omega }f(z^{\prime })\int_{\Omega }K_{\psi }(z,z^{\prime })K_{\psi
}(z^{\prime },w)d\mu ^{+}(z^{\prime })d\mu ^{+}(w)\text{,}  \notag
\end{eqnarray}%
providing an extension of $T_{\Omega }$ to the whole $L^{2}(\mathbb{C}^{+}%
\mathbf{,}s^{-2}dxds)$ vanishing in the complement of the range of $W_{\psi
} $. By definition of $1$-point intensity, 
\begin{equation*}
\mathbb{E}\left( \mathcal{X}_{\psi }(\Omega )\right) =\int_{\Omega }K_{\psi
}\left( z,z\right) d\mu ^{+}(z)=trace\left( T_{\Omega }\right) =\left\vert
\Omega \right\vert _{h}\text{.}
\end{equation*}%
while the number variance of $\mathcal{X}_{\psi }(\Omega )$ is (see \cite[%
pg. 40]{Variance} for a detailed proof):%
\begin{equation*}
\mathbb{V}\left[ \mathcal{X}_{\psi }(\Omega )\right] =\mathbb{E}\left( 
\mathcal{X}_{\psi }(\Omega )^{2}\right) -\mathbb{E}\left( \mathcal{X}_{\psi
}(\Omega )\right) ^{2}=trace\left( T_{\Omega }\right) -trace\left( T_{\Omega
}^{2}\right) \text{.}
\end{equation*}

Our first result gives the asymptotic behavior of the variance and the exact
value of the asymptotic constant. A related formula has been obtained by
Shirai for Ginibre ensembles in higher Landau levels \cite{Shirai}. A new
proof of Shirai's formula has been obtained by Demni and Lazag \cite%
{HyperbolicDPP}, using a quite flexible argument based on geometric
considerations, which inspired the following result.

\begin{theorem}
Let $\psi $ admissible with $\left\Vert \psi \right\Vert _{H^{2}\left( 
\mathbb{C}^{+}\right) }^{2}=1$. We have the following explicit formula for
the variance of the affine ensemble associated with $\psi $:%
\begin{equation*}
\mathbb{V}\left[ \mathcal{X}_{\psi }(D(i,R))\right] =\int_{\mathbb{C}%
^{+}}\left\vert W_{\psi }\psi ({w})\right\vert ^{2}\left\vert D(i,R)^{c}\cap
D(w,R)\right\vert _{h}d\mu ^{+}(w)\text{.}
\end{equation*}%
Moreover, as $R\rightarrow 1^{-}$,%
\begin{equation}
\mathbb{V}\left[ \mathcal{X}_{\psi }(D(i,R))\right] \sim \frac{c_{\psi }}{%
1-R^{2}}\text{,}  \label{asymptotic}
\end{equation}%
where the asymptotic constant $c_{\psi }$ is given by%
\begin{equation}
c_{\psi }=\frac{1}{2}\int_{\mathbb{C}^{+}}\left\vert W_{\psi }\psi ({w}%
)\right\vert ^{2}\arccos \left( 1-2\left\vert \frac{w-i}{w+i}\right\vert
^{2}\right) d\mu ^{+}(w)\text{.}  \label{constant}
\end{equation}
\end{theorem}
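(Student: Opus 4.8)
The plan is to first reduce the number variance to an off-diagonal energy integral, then exploit the affine covariance of $K_\psi$ to collapse the resulting double integral to a single one weighted by $|W_\psi\psi|^2$, and finally to evaluate the geometric overlap factor asymptotically.

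First I would record that, by the reproducing property of the correlation kernel (equations (\ref{eq:rep-eq})--(\ref{RepKern}) together with (\ref{Cg}) and the normalization $K_\psi(z,z)=1$), one has $\int_{\mathbb{C}^+}|K_\psi(z,w)|^2\,d\mu^+(w)=K_\psi(z,z)=1$. Combining this with $\mathbb V=\mathrm{trace}(T_\Omega)-\mathrm{trace}(T_\Omega^2)=|\Omega|_h-\int_\Omega\int_\Omega|K_\psi(z,w)|^2\,d\mu^+(w)\,d\mu^+(z)$ and splitting $\int_\Omega=\int_{\mathbb{C}^+}-\int_{\Omega^c}$ in the inner integral gives the identity
\[ \mathbb V[\mathcal X_\psi(\Omega)]=\int_\Omega\int_{\Omega^c}|K_\psi(z,w)|^2\,d\mu^+(w)\,d\mu^+(z),\qquad \Omega=D(i,R). \]
This exhibits the variance as the mass of the kernel that leaks across the boundary of $\Omega$.

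Next I would use the covariance $K_\psi(g\cdot z,g\cdot w)=K_\psi(z,w)$, which follows from unitarity of $\pi$ and $\pi(g)\pi(h)=\pi(gh)$, so that $|K_\psi(z,w)|^2=|W_\psi\psi(w^{-1}\cdot z)|^2$. Fixing $w\in\Omega^c$ and substituting $z=w\cdot v$ (an affine map, hence a $d\mu^+$-preserving hyperbolic isometry) turns the inner region $z\in D(i,R)$ into $v\in w^{-1}\cdot D(i,R)=D(w^{-1},R)$, and an application of Fubini yields
\[ \mathbb V=\int_{\mathbb{C}^+}|W_\psi\psi(v)|^2\,\big|\{\,w\in D(i,R)^c:\varrho(v,w^{-1})<R\,\}\big|_h\,d\mu^+(v). \]
It remains to identify the inner hyperbolic measure with $|D(i,R)^c\cap D(v,R)|_h$. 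Using $\varrho(v,w^{-1})=\varrho(w\cdot v,i)$ (left-invariance of $\varrho$) and $\varrho(w,i)=\varrho(w^{-1},i)$, this set is carried onto $D(i,R)^c\cap D(v,R)$ by the group inversion $w\mapsto w^{-1}$. Here lies the one genuinely new difficulty compared with the unimodular Weyl--Heisenberg case: because the $ax+b$ group is not unimodular, $w\mapsto w^{-1}$ is not a hyperbolic isometry and rescales $d\mu^+$ by the factor $\mathrm{Im}\,w$. I would neutralize this modular factor using that $\mathrm{Im}\,z$ is annihilated by the hyperbolic Laplacian $s^2(\partial_x^2+\partial_s^2)$, so that its mean over every pseudohyperbolic circle centred at $i$ equals $\mathrm{Im}(i)=1$; combined with the inversion symmetry $|W_\psi\psi(v)|=|W_\psi\psi(v^{-1})|$, averaging the spurious weight $\mathrm{Im}\,w-1$ against the $i$-radial circular measure makes it integrate to zero and leaves exactly the stated formula. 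This is the step I expect to be the main obstacle.

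With the exact formula in hand, the asymptotics reduce to plane hyperbolic geometry: the behaviour of $|D(i,R)^c\cap D(v,R)|_h$, the area of the lune cut from $D(v,R)$ by the complement of the equal-radius disk $D(i,R)$, as $R\to1^-$. Writing the common hyperbolic radius as $\rho=2\tanh^{-1}R\to\infty$ and recalling $|D(z,R)|_h=4\pi R^2/(1-R^2)$, I would compute the lune area by integrating $\sinh t$ over the thin shell $\{t\ge\rho\}\cap D(v,R)$ in geodesic polar coordinates centred at $i$; passing to the ideal boundary shows $(1-R^2)|D(i,R)^c\cap D(v,R)|_h\to\tfrac12\arccos\!\big(1-2\varrho(v,i)^2\big)$, the factor $\arccos(1-2\varrho^2)=2\arcsin\varrho$ being the angular width of the overlap seen from the centre. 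Finally I would justify interchanging the limit with $\int_{\mathbb{C}^+}(\cdots)\,d\mu^+(v)$ by dominated convergence, using the uniform bound $(1-R^2)|D(i,R)^c\cap D(v,R)|_h\le(1-R^2)|D(v,R)|_h\le4\pi$ and the integrability $\int_{\mathbb{C}^+}|W_\psi\psi|^2\,d\mu^+=C_\psi<\infty$ from (\ref{Cg}); this gives $\mathbb V\sim c_\psi/(1-R^2)$ with $c_\psi$ as in (\ref{constant}).
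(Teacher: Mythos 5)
Your proposal follows the paper's strategy in all the steps that are sound, and its one genuine gap sits exactly at the step you flag as ``the main obstacle.'' The parts that work: the reduction of the variance to $\int_{D(i,R)}\int_{D(i,R)^{c}}|K_{\psi }(z,w)|^{2}\,d\mu ^{+}(w)\,d\mu ^{+}(z)$, the substitution $z=w\cdot v$ using left invariance of $d\mu ^{+}$, and Fubini agree with the paper's proof of (\ref{V1}); your dominated-convergence justification of the limit $R\rightarrow 1^{-}$ (via $(1-R^{2})|D(v,R)|_{h}=4\pi R^{2}$ and (\ref{Cg})) is correct and in fact more careful than the paper, which interchanges limit and integral silently. (Both you and the paper also invoke $\int_{\mathbb{C}^{+}}|K_{\psi }(z,w)|^{2}d\mu ^{+}(w)=K_{\psi }(z,z)=1$, whose left-hand side equals $C_{\psi }$ by (\ref{Cg}) and left invariance, so strictly both arguments presuppose the extra normalization $C_{\psi }=1$; I set this aside since it is common to the two.) The gap: after Fubini, the weight multiplying $|W_{\psi }\psi (v)|^{2}$ is $\mu ^{+}\bigl(\{w\in D(i,R)^{c}:\varrho (v,w^{-1})<R\}\bigr)=\mu ^{+}\bigl(\iota (D(i,R)^{c}\cap D(v,R))\bigr)$, where $\iota (w)=w^{-1}$; since $\iota $ carries the left Haar measure to the right Haar measure, this equals $\int_{D(i,R)^{c}\cap D(v,R)}\mathrm{Im}(u)\,d\mu ^{+}(u)$, not $|D(i,R)^{c}\cap D(v,R)|_{h}$ --- exactly the modular factor you identify. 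Your proposed neutralization does not close this gap. The mean value property of the harmonic function $\mathrm{Im}\,u$ applies to circles centred at $i$, but the lune $D(i,R)^{c}\cap D(v,R)$ is not radially symmetric about $i$, so $\int_{\mathrm{lune}}(\mathrm{Im}\,u-1)\,d\mu ^{+}(u)$ does not vanish pointwise; and the inversion symmetry $|W_{\psi }\psi (v)|=|W_{\psi }\psi (v^{-1})|$ alone cannot produce the cancellation: symmetrizing the error term under $v\mapsto v^{-1}$ (Jacobian $\mathrm{Im}(v)$) reduces it to the pointwise identity $\Phi (v)+\mathrm{Im}(v)\Phi (v^{-1})=0$ for $\Phi (v):=\int_{D(i,R)^{c}\cap D(v,R)}(\mathrm{Im}\,u-1)\,d\mu ^{+}(u)$, and for $v=ti$ this identity is equivalent to the $\mu ^{+}$-average of $\mathrm{Im}\,u$ over the lens $D(i,R)\cap D(ti,R)$ being exactly $\tfrac{1+t}{2}$. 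This ``two-centre mean value property'' is false: for $t=2$, $R^{2}=1/3$, a direct evaluation of the two disc integrals gives an average of about $1.34$, not $3/2$.

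What does make the step valid is radiality rather than inversion symmetry: if $|W_{\psi }\psi (v)|$ depends only on $\varrho (v,i)$, one may first average the outer integral over hyperbolic rotations about $i$; the rotational average of $1_{D(v,R)}(u)$ is then a radial function of $u$, and the circle mean value property you invoke does annihilate the weight $\mathrm{Im}\,u-1$ on every circle $\varrho (u,i)=\tau $, yielding (\ref{V1}) and hence (\ref{asymptotic})--(\ref{constant}). This rigorous version of your idea covers the Bergman and Maass--Landau ensembles of Section 4, because by Proposition 1 their kernels satisfy $|K_{\psi }(z,w)|=f(\varrho (z,w))$; it does not cover a general admissible $\psi $. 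You should also know the obstacle is not an artifact of your route: the paper's proof crosses the same point by asserting $\varrho (z\cdot w,i)=\varrho (w^{-1},z)$ and $D(w^{-1},R)=D(w,R)$, but the first identity is equivalent to $\varrho (z\cdot w,i)=\varrho (w\cdot z,i)$ and fails in the non-abelian affine group (for $z=1+2i$, $w=1+i$ one has $z\cdot w=3+2i$, $w\cdot z=2+2i$, and $\varrho (3+2i,i)=\sqrt{5}/3\neq \sqrt{5/13}=\varrho (2+2i,i)$); the correct identity $\varrho (z\cdot w,i)=\varrho (w,z^{-1})$ produces the inverted set and hence the modular factor you found. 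So, to its credit, your attempt locates a real subtlety that the paper's own argument glosses over; but as a proof of the theorem for all admissible $\psi $ it is incomplete at precisely this step, and as it stands it establishes the exact formula only under the additional hypothesis that $|W_{\psi }\psi |$ is radial about $i$.
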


\begin{proof}
In the context of the concentration operator defined in the beginning of the
section, set $\Omega =D(i,R)$. Observe that $K_{\psi }\left( z,z\right)
=W_{\psi }\psi ({z\cdot z}^{-1})=W_{\psi }\psi ({i})$, that $W_{\psi }\psi ({%
w}^{-1}\cdot z)=\left\langle \pi (w)\psi ,\pi ({z})\psi \right\rangle
_{H^{2}(\mathbb{C}^{+})}=\overline{W_{\psi }\psi ({z}^{-1}\cdot {w})}$, and
use the reproducing kernel equation, $d\mu ^{+}$ as the left Haar measure on
the $ax+b$ group, and Fubini, to write:%
\begin{eqnarray*}
&&\mathbb{V}\left[ \mathcal{X}_{\psi }(D(i,R))\right]  \\
&=&\int_{D(i,R)}W_{\psi }\psi ({i})d\mu ^{+}(z)-\int_{D(i,R)\times
D(i,R)}\left\vert W_{\psi }\psi ({w}^{-1}\cdot {z})\right\vert ^{2}d\mu
^{+}(w)d\mu ^{+}(z) \\
&=&\int_{D(i,R)\times \mathbb{C}^{+}}\left\vert W_{\psi }\psi ({z}^{-1}\cdot 
{w})\right\vert ^{2}d\mu ^{+}(w)d\mu ^{+}(z)-\int_{D(i,R)\times
D(i,R)}\left\vert W_{\psi }\psi ({z}^{-1}\cdot {w})\right\vert ^{2}d\mu
^{+}(w)d\mu ^{+}(z) \\
&=&\int_{D(i,R)\times D(i,R)^{c}}\left\vert W_{\psi }\psi ({z}^{-1}\cdot {w}%
)\right\vert ^{2}d\mu ^{+}(w)d\mu ^{+}(z) \\
&=&\int_{\mathbb{C}^{+}}1_{D(i,R)^{c}}(z)\left[ \int_{\mathbb{C}%
^{+}}1_{D(i,R)}(w)\left\vert W_{\psi }\psi ({z}^{-1}\cdot {w})\right\vert
^{2}d\mu ^{+}(w)\right] d\mu ^{+}(z) \\
&=&\int_{\mathbb{C}^{+}}1_{D(i,R)^{c}}(z)\left[ \int_{\mathbb{C}%
^{+}}1_{D(i,R)}(z\cdot w)\left\vert W_{\psi }\psi ({w})\right\vert ^{2}d\mu
^{+}(w)\right] d\mu ^{+}(z) \\
&=&\int_{\mathbb{C}^{+}}1_{D(i,R)^{c}}(z)\left[ \int_{\mathbb{C}%
^{+}}1_{D(w^{-1},R)}(z)\left\vert W_{\psi }\psi ({w})\right\vert ^{2}d\mu
^{+}(w)\right] d\mu ^{+}(z) \\
&=&\int_{\mathbb{C}^{+}}\left\vert W_{\psi }\psi ({w})\right\vert ^{2}\left[
\int_{D(i,R)^{c}\cap D(w^{-1},R)}d\mu ^{+}(z)\right] d\mu ^{+}(w)\text{,}
\end{eqnarray*}%
where $1_{D(i,R)}(z\cdot w)=1_{D(w^{-1},R)}(z)$ follows from $\varrho
(z.w,i)=\varrho (w^{-1},z)$. Since $D(w^{-1},R)=D(w,R)$,\ we conclude that%
\begin{equation}
\mathbb{V}\left[ \mathcal{X}_{\psi }(D(i,R))\right] =\int_{\mathbb{C}%
^{+}}\left\vert W_{\psi }\psi ({w})\right\vert ^{2}\left\vert D(i,R)^{c}\cap
D(w,R)\right\vert _{h}d\mu ^{+}(w)\text{.}  \label{V1}
\end{equation}%
To prove (\ref{asymptotic}) and to determine the asymptotic constant (\ref%
{constant}), we will need to find the area $\left\vert D(i,R)^{c}\cap
D(w^{-1},R)\right\vert _{h}$ when $R\rightarrow 1$. First move the integrals
conformal to the unit disc. Setting%
\begin{equation*}
\xi _{(z)}=\frac{z-i}{z+i}\in \mathbb{D};\text{ \ \ \ \ \ \ }\xi _{(w)}^{-1}=%
\frac{w+1}{i(w-1)}\in \mathbb{C}^{+}\text{,}
\end{equation*}%
the measures can be related by 
\begin{equation*}
(\mathrm{Im}\,z)^{\alpha }d\mu _{\mathbb{C}^{+}}(z)=\frac{2^{\alpha
+1}(1-\left\vert \xi _{(z)}\right\vert ^{2})^{\alpha }}{(1-\xi
_{(z)})^{2\alpha +4}}d\mu _{\mathbb{D}}(\xi _{(z)})\text{,}
\end{equation*}%
$d\mu _{\mathbb{D}}$ being the Lesbegue measure on $\mathbb{D}$. Denoting by 
$\mathbb{D}(\xi _{(w)},R)$ the hyperbolic disc on $\mathbb{D}$ resulting
from conformal mapping $D(w,R)$ we have%
\begin{equation*}
\int_{D(i,R)^{c}\cap D(w,R)}\frac{1}{\left( \mathrm{Im}\,z\right) ^{2}}d\mu
_{\mathbb{C}^{+}}(z)=\frac{1}{2}\int_{\mathbb{D}(0,R)^{c}\cap \mathbb{D}(\xi
_{(w)},R)}(1-\left\vert \xi _{(z)}\right\vert ^{2})^{-2}d\mu _{\mathbb{D}%
}(\xi _{(z)})
\end{equation*}%
and we can use the computation of Theorem 1 in \cite{HyperbolicDPP}, leading
to, as $R\rightarrow 1^{-}$,%
\begin{equation*}
\left\vert D(i,R)^{c}\cap D(w,R)\right\vert _{h}=\frac{1}{2}\left\vert
\left( \mathbb{D}(0,R)^{c}\cap \mathbb{D}(\xi _{(w)},R)\right) \right\vert
_{h}\sim \frac{1}{2}\frac{\arccos (1-2\left\vert \xi _{(w)}\right\vert ^{2})%
}{1-R^{2}}\text{.}
\end{equation*}%
Thus, 
\begin{equation*}
\left\vert D(i,R)^{c}\cap D(w,R)\right\vert _{h}\sim \frac{1}{2}\frac{%
\arccos (1-2\left\vert \frac{w-i}{w+i}\right\vert ^{2})}{1-R^{2}}
\end{equation*}%
It follows that, as $R\rightarrow 1^{-}$, 
\begin{equation*}
\mathbb{V}\left[ \mathcal{X}_{\psi }(D(i,R))\right] \sim \frac{1}{2}\frac{1}{%
1-R^{2}}\int_{\mathbb{C}^{+}}\left\vert W_{\psi }\psi ({w})\right\vert
^{2}\arccos (1-2\left\vert \frac{w-i}{w+i}\right\vert ^{2})d\mu ^{+}(w)\text{%
.}
\end{equation*}
\end{proof}

The next result shows with a two-sided inequality that the variance of the
affine ensemble is proportional to $\left\vert \Omega \right\vert _{h}$. The
first part of the result is essentially an interpretation of the results in 
\cite{DeMarie}, where the lower inequality is obtained for a large class of
sets $\Omega $, assuming that, for some $c>0$, 
\begin{equation}
\left\vert \left\langle \pi ({z})\psi ,\pi ({w})\psi \right\rangle _{H(%
\mathbb{C}^{+})}\right\vert ^{2}\geq \frac{c}{\left\vert D(z,R)\right\vert
_{h}^{2}}\int_{\mathbb{C}^{+}}1_{D(\xi ,R)}(z)1_{D(\xi ,R)}(w)d\xi \text{.}
\label{two}
\end{equation}%
For $\Omega =D(i,R)$ we provide a proof of an upper bound involving the
admissibility constant $C_{\psi }$.

\begin{theorem}
Assuming that (\ref{two}) holds, we have%
\begin{equation}
\left\vert \Omega \right\vert _{h}\lesssim \mathbb{V}\left[ \mathcal{X}%
_{\psi }(\Omega )\right] \leq \left\vert \Omega \right\vert _{h}\text{.}
\label{cond}
\end{equation}%
If $\Omega =D(i,R)$, then 
\begin{equation*}
\mathbb{V}\left[ \mathcal{X}_{\psi }(D(i,R))\right] \leq C_{\psi }\left\vert
D(i,R)\right\vert _{h}\text{.}
\end{equation*}
\end{theorem}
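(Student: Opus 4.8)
The plan is to treat the three inequalities separately: the easy upper bound $\mathbb{V}[\mathcal{X}_\psi(\Omega)] \le |\Omega|_h$, the cited lower bound $|\Omega|_h \lesssim \mathbb{V}[\mathcal{X}_\psi(\Omega)]$, and the sharper ball estimate $\mathbb{V}[\mathcal{X}_\psi(D(i,R))] \le C_\psi|D(i,R)|_h$.

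I would begin with the upper bound, which is purely formal. Writing $\mathbb{V}[\mathcal{X}_\psi(\Omega)] = \mathrm{trace}(T_\Omega) - \mathrm{trace}(T_\Omega^2) = |\Omega|_h - \int_{\Omega\times\Omega}|K_\psi(z,w)|^2\,d\mu^+(z)\,d\mu^+(w)$, where the first trace equals $|\Omega|_h$ because $K_\psi(z,z) = 1$, the subtracted term is manifestly nonnegative, so $\mathbb{V}[\mathcal{X}_\psi(\Omega)] \le |\Omega|_h$. This needs no hypothesis on $\Omega$ beyond measurability.

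For the lower bound I would follow the route of \cite{DeMarie}. The decisive move is to pass to the boundary form of the variance and to insert the localization hypothesis (\ref{two}): bounding $|K_\psi(z,w)|^2 = |\langle\pi(z)\psi,\pi(w)\psi\rangle|^2$ from below by $\frac{c}{|D(z,R)|_h^2}\int_{\mathbb{C}^+}1_{D(\xi,R)}(z)1_{D(\xi,R)}(w)\,d\xi$ and applying Fubini turns the variance into an integral that measures, for each $\xi$, how much the ball $D(\xi,R)$ simultaneously overlaps $\Omega$ and its complement. Since every such ball has the same hyperbolic area, this quantity is comparable to the hyperbolic measure of the boundary layer of $\Omega$, and the linear, Cheeger-type isoperimetric inequality of the hyperbolic plane --- valid for the large class of sets considered in \cite{DeMarie} --- converts it into a bound $\gtrsim |\Omega|_h$. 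I expect this to be the main obstacle: the trivial estimate gives only $\mathbb{V}[\mathcal{X}_\psi(\Omega)] \le |\Omega|_h$, whereas the matching lower bound genuinely exploits both the localization of the kernel and the fact that in hyperbolic geometry the measure of the boundary of a region is comparable to its area.

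Finally, for $\Omega = D(i,R)$ I would set aside the operator identities and invoke instead the closed formula of the previous theorem, $\mathbb{V}[\mathcal{X}_\psi(D(i,R))] = \int_{\mathbb{C}^+}|W_\psi\psi(w)|^2\,|D(i,R)^c\cap D(w,R)|_h\,d\mu^+(w)$. The only geometric input needed is that the hyperbolic area is invariant under the transitive action of the $ax+b$ group by isometries, so that $D(w,R)$ is the image of $D(i,R)$ under a group element carrying $i$ to $w$, whence $|D(i,R)^c\cap D(w,R)|_h \le |D(w,R)|_h = |D(i,R)|_h$. Pulling the constant $|D(i,R)|_h$ out of the integral and recognizing the remaining factor as $\int_{\mathbb{C}^+}|W_\psi\psi(w)|^2\,d\mu^+(w) = C_\psi$ by (\ref{Cg}) gives $\mathbb{V}[\mathcal{X}_\psi(D(i,R))] \le C_\psi|D(i,R)|_h$. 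This last step is routine once the invariance of the area is recorded.
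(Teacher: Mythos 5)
Your proposal is correct and follows essentially the same route as the paper's proof: the trivial trace estimate $\mathrm{trace}(T_\Omega^2)\geq 0$ for the upper bound, reliance on \cite[Lemma 3.2]{DeMarie} under hypothesis (\ref{two}) for the lower bound, and, for the ball case, the variance formula of Theorem 1 together with $\left\vert D(i,R)^{c}\cap D(w,R)\right\vert _{h}\leq \left\vert D(w,R)\right\vert _{h}=\left\vert D(i,R)\right\vert _{h}$ and the identity (\ref{Cg}). Your extra sketch of the mechanism inside the De Mari--Feichtinger--Nowak lemma (Fubini on the overlap integral plus the linear hyperbolic isoperimetric inequality) is a reasonable gloss, but, exactly as in the paper, the lower bound ultimately rests on that citation rather than on a self-contained argument.
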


\begin{proof}
Using the operator $T_{\Omega }$ we easily obtain an upper bound for the
variance 
\begin{equation*}
\mathbb{V}\left[ \mathcal{X}_{\psi }(\Omega )\right] =trace\left( T_{\Omega
}\right) -trace\left( T_{\Omega }^{2}\right) \leq trace\left( T_{\Omega
}\right) =\left\vert \Omega \right\vert _{h}\text{,}
\end{equation*}%
since $trace\left( T_{\Omega }^{2}\right) \leq trace\left( T_{\Omega
}\right) $. The lower inequality follows from \cite[Lemma 3.2]{DeMarie},
where it is shown that, under the condition (\ref{two}),%
\begin{equation*}
trace\left( T_{\Omega }\right) -trace\left( T_{\Omega }^{2}\right) \gtrsim
\left\vert \Omega \right\vert _{h}\text{.}
\end{equation*}%
Now set $\Omega =D(i,R)$. Then, from Theorem 1, 
\begin{eqnarray*}
\mathbb{V}\left[ \mathcal{X}_{\psi }(D(i,R))\right] &=&\int_{\mathbb{C}%
^{+}}\left\vert W_{\psi }\psi ({w})\right\vert ^{2}\left[ \int_{D(i,R)^{c}%
\cap D(w,R)}d\mu ^{+}(z)\right] d\mu ^{+}(w) \\
&\leq &\int_{\mathbb{C}^{+}}\left\vert W_{\psi }\psi ({w})\right\vert
^{2}\left\vert D(w,R)\right\vert _{h}d\mu ^{+}(w) \\
&=&\left\vert D(i,R)\right\vert _{h}\int_{\mathbb{C}^{+}}\left\vert W_{\psi
}\psi ({w})\right\vert ^{2}d\mu ^{+}(w) \\
&=&C_{\psi }\left\vert D(i,R)\right\vert _{h}\text{,}
\end{eqnarray*}%
using $\left\vert D(w,R)\right\vert _{h}=\left\vert D(i,R)\right\vert _{h}$\
and (\ref{Cg}).
\end{proof}

\begin{remark}
A reduced DPP adapted to $\Omega $ can be defined as a Toeplitz smooth
restriction of the affine ensemble to $\Omega $ using the operator (\ref%
{Toeplitz}), following a scheme similar to the one used to define the finite
Weyl-Heisenberg ensembles in \cite{abgrro17}. Denoting by $\{p_{j}^{\Omega
}\}$ the eigenfunctions of (\ref{Toeplitz}), one associates with $\Omega $\
the reduced finite dimensional Hilbert space%
\begin{equation*}
W_{\psi }^{N_{\Omega }}=Span\{p_{j}^{\Omega }\}_{n=1,...,N_{\Omega }}\subset
W_{\psi }\text{,}
\end{equation*}%
where $N_{\Omega }$ $=\left\lfloor \left\vert \Omega \right\vert
_{h}\right\rfloor $, the least integer than or equal to $\left\vert \Omega
\right\vert _{h}$. The $\Omega $-reduced affine ensemble is the finite
dimensional DPP $\mathcal{X}_{\psi }^{\Omega }$\ generated by the kernel%
\begin{equation*}
K_{\psi ,\Omega }(z,w)=\sum_{j=0}^{N_{\Omega }}p_{j}^{\Omega }(z)\overline{%
p_{j}^{\Omega }(w)}\text{.}
\end{equation*}
\end{remark}

\section{The Maass-Landau levels processes}

\subsection{Bergman spaces}

The reproducing kernel of the space $W_{\widetilde{\psi _{0}^{\alpha }}%
}\left( H^{2}\left( \mathbb{C}^{+}\right) \right) $ is the following
weighted Bergman kernel (take $n=0$ in Proposition 1 in the Appendix):%
\begin{equation*}
K_{_{\widetilde{\psi _{0}^{\alpha }}}}(z,w)=\alpha \left( 4\,\mathrm{Im}\,z\,%
\mathrm{Im}\,w\right) ^{\frac{\alpha +1}{2}}\left( \frac{1}{-i(z-\overline{w}%
)}\right) ^{\alpha +1}\text{.}
\end{equation*}%
This is the `ground level' case of the structure considered in the next
section. For $\alpha =1$ it is a $\mathbb{C}^{+}$ weighted version of the
DPP studied by Peres and Vir\'{a}g \cite{PeresVirag}.

\subsection{Hyperbolic Maass-Landau levels}

The Hamiltonian describing the dynamics of a charged particle moving on the
Poincar\'{e} upper half-plane $\mathbb{C}^{+}$ under the action of the
magnetic field $B$ is given by : 
\begin{equation*}
H_{B}:=s^{2}\left( \frac{\partial ^{2}}{\partial x^{2}}+\frac{\partial ^{2}}{%
\partial s^{2}}\right) -2iBs\frac{\partial }{\partial x}
\end{equation*}%
The operator $H_{B}$ was first introduced by Maass in number theory \cite%
{Maass,Patterson} and its interpretation as a hyperbolic analogue of the
Landau Hamiltonian has been put forward by Comtet (see \cite{Comtet,AoP}).
We list here the following important properties of $H_{B}$ as an operator.

\begin{enumerate}
\item $H_{B}$ is an elliptic densely defined operator on the Hilbert space $%
L^{2}\left( \mathbb{C}^{+},d\mu ^{+}\right) $, with a unique self-adjoint
realization that we denote also by $H_{B}$.

\item The spectrum\ of $H_{B}$ in $L^{2}\left( \mathbb{C}^{+},d\mu
^{+}\right) $ consists of two parts:\textit{\ }a continuous part $\left[
1/4,+\infty \right[ $, corresponding to \textit{scattering states} and a
finite number of eigenvalues with infinite degeneracy (\textit{hyperbolic
Landau levels}) of the form 
\begin{equation}
\epsilon _{n}^{B}:=(B-n)\left( 1-B+n\right) ,n=0,1,2,\cdots ,\lfloor B-\frac{%
1}{2}\rfloor \text{.}  \label{3.1.2}
\end{equation}%
The finite part of the spectrum exists provided $2B>1$. The notation $%
\lfloor a\rfloor $ stands for the greatest integer not exceeding $a.$

\item For each fixed eigenvalue $\epsilon _{n}^{B}$, we denote by 
\begin{equation}
\mathcal{E}_{n}^{B}\left( \mathbb{C}^{+}\right) =\left\{ F\in L^{2}\left( 
\mathbb{C}^{+},d\mu ^{+}\right) ,H_{B}F=\epsilon _{n}^{B}F\right\} 
\label{3.1.3}
\end{equation}%
the corresponding eigenspace, which has a reproducing kernel given by 
\begin{eqnarray*}
&&K_{n,B}\left( z,w\right)  \\
&=&\frac{\left( -1\right) ^{n}\Gamma \left( 2B-n\right) }{n!\Gamma \left(
2B-2n\right) }\left( \frac{4\,\mathrm{Im}\,z\,\mathrm{Im}\,w}{\left\vert z-%
\overline{w}\right\vert ^{2}}\right) ^{B-n}\text{ }\left( \frac{\overline{z}%
-w}{\overline{w}-z}\right) ^{B}F\left[ 
\begin{array}{c}
-2B-n,-n \\ 
2B-2n%
\end{array}%
;\frac{4\,\mathrm{Im}\,z\,\mathrm{Im}\,w}{\left\vert z-\overline{w}%
\right\vert ^{2}}\right] \text{,}
\end{eqnarray*}%
where $F$ is the Gauss hypergeometric function:%
\begin{equation*}
F\left[ 
\begin{array}{c}
a,b \\ 
c%
\end{array}%
;z\right] =_{2}F_{1}\left[ 
\begin{array}{c}
a,b \\ 
c%
\end{array}%
;z\right] =\sum_{n=0}^{\infty }\frac{(a)_{n}(b)_{n}}{n!(c)_{n}}z^{n}\text{.}
\end{equation*}
\end{enumerate}

The condition $2B>1$ ensuring the existence of these discrete eigenvalues in 
$\left( 2.\right) $ means that the magnetic field has to be strong enough to
capture the particle in a closed orbit. If this condition is not fulfilled,
the motion will be unbounded and the orbit of the particle will intercept
the upper half-plane boundary whose points stand for `points at infinity'\
(see \cite[p. 189]{Comtet}). \ The eigenvalues in $\left( 2.\right) $ which
are below the continuous spectrum have eigenfunctions called \textit{bound
states }since the particle in such a state cannot leave the system without
additional energy. Then the number of particle layers (Landau levels), $%
\lfloor B-\frac{1}{2}\rfloor $, depends on the strength $B$ of the magnetic
field.\ 

To make the identification with special cases of the affine ensemble kernel,
in the Appendix we compute the reproducing kernels and admissibility
constants of the spaces $W_{\widetilde{\psi _{n}^{\alpha }}}\left(
H^{2}\left( \mathbb{C}^{+}\right) \right) $ in terms of hypergeometric
functions and Jacobi polynomials. According to Proposition 1, the
reproducing kernel of $\mathcal{W}_{\widetilde{\psi _{n}^{2(B-n)-1}}}=W_{%
\widetilde{\psi _{n}^{\alpha }}}\left( H^{2}\left( \mathbb{C}^{+}\right)
\right) $ is given by%
\begin{equation*}
K_{\widetilde{\psi _{n}^{2(B-n)-1}}}(z,w)=K_{n,B}\left( z,w\right) \text{.}
\end{equation*}%
Thus, the kernels \ $K_{_{\widetilde{\psi _{n}^{2(B-n)-1}}}}(z,w)$, are
precisely the reproducing kernels of the eigenspaces associated with the
pure point spectrum of the Maass Laplacian. As a result, all properties of
the affine ensemble are automatically translated to the DPP associated with
the reproducing kernels $K_{\psi _{n}^{2(B-n)-1}}(z,w)$, with asymptotic
constant%
\begin{eqnarray*}
c_{\widetilde{\psi _{n}^{2(B-n)-1}}} &=&\frac{1}{2}\int_{\mathbb{C}%
^{+}}\left\vert W_{\widetilde{\psi _{n}^{2(B-n)-1}}}\widetilde{\psi
_{n}^{2(B-n)-1}}({w})\right\vert ^{2}\arccos (1-2\left\vert \frac{w-i}{w+i}%
\right\vert ^{2})d\mu ^{+}(w) \\
&=&\frac{1}{2}\int_{\mathbb{C}^{+}}\left\vert K_{n,B}(i,w)\right\vert
^{2}\arccos (1-2\left\vert \frac{w-i}{w+i}\right\vert ^{2})d\mu ^{+}(w)
\end{eqnarray*}%
and admissibility constant%
\begin{equation*}
C_{\widetilde{\psi _{n}^{2(B-n)-1}}}=\frac{4\pi }{2(B-n)-1}\text{.}
\end{equation*}%
Then, Theorems 1 and 2 lead to the following result for \emph{the
Maass-Landau process, the DPP }$\mathcal{X}_{B,n}$\emph{\ generated by the
reproducing kernel }$K_{n,B}\left( z,w\right) $\emph{\ of the eigenspace of }%
$H_{B}$\emph{\ associated with the Maass-Landau level eigenvalue }$\epsilon
_{n}^{B}:=(B-n)\left( 1-B+n\right) $, for $n=0,1,2,\cdots ,\lfloor B-\frac{1%
}{2}\rfloor $.

\begin{corollary}
The variance of $\mathcal{X}_{B,n}$ is given by 
\begin{equation*}
\mathbb{V}\left[ \mathcal{X}_{B,n}(D(i,R))\right] =\int_{\mathbb{C}%
^{+}}\left\vert K_{n,B}(i,w)\right\vert ^{2}\left\vert D(i,R)^{c}\cap
D(w,R)\right\vert _{h}d\mu ^{+}(w)\text{.}
\end{equation*}%
Moreover, when $R\rightarrow 1^{-}$,%
\begin{equation*}
\mathbb{V}\left[ \mathcal{X}_{B,n}(D(i,R))\right] \sim \frac{c_{n,B}}{1-R^{2}%
}\text{,}
\end{equation*}%
where$\ c_{n,B}=\frac{1}{2}\int_{\mathbb{C}^{+}}\left\vert
K_{n,B}(i,w)\right\vert ^{2}\arccos (1-2\left\vert \frac{w-i}{w+i}%
\right\vert ^{2})d\mu ^{+}(w)$. Finally, the variance of $\mathcal{X}_{B,n}$
satisfies the non-asymptotic bound%
\begin{equation*}
\mathbb{V}\left[ \mathcal{X}_{B,n}(D(i,R))\right] \leq \left( \frac{4\pi }{%
2(B-n)-1}\right) \left\vert D(i,R)\right\vert _{h}\text{.}
\end{equation*}
\end{corollary}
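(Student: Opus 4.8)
The plan is to read off the Corollary as the specialization of Theorems 1 and 2 to the admissible wavelet $\psi=\widetilde{\psi_n^{\alpha}}$ with $\alpha=2(B-n)-1$. The two substantive inputs are both recorded immediately before the statement: the kernel identity $K_{\widetilde{\psi_n^{2(B-n)-1}}}(z,w)=K_{n,B}(z,w)$ supplied by Proposition 1, and the admissibility constant $C_{\widetilde{\psi_n^{2(B-n)-1}}}=\tfrac{4\pi}{2(B-n)-1}$.

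The one small lemma needed to translate the statements of Theorem 1 is the identity $|W_\psi\psi(w)|=|K_{n,B}(i,w)|$. To see it, note that $i$ is the neutral element $(0,1)$ of the $ax+b$ group, so (\ref{representation}) gives $\pi(i)\psi=\psi$ and hence
\begin{equation*}
K_\psi(i,w)=\langle\pi(w)\psi,\pi(i)\psi\rangle_{H^2(\mathbb{C}^+)}=\langle\pi(w)\psi,\psi\rangle_{H^2(\mathbb{C}^+)}=\overline{W_\psi\psi(w)}.
\end{equation*}
Taking absolute values and using $K_\psi=K_{n,B}$ gives $|W_\psi\psi(w)|^2=|K_{n,B}(i,w)|^2$.

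With this in hand the three claims are immediate. Substituting $|W_\psi\psi(w)|^2=|K_{n,B}(i,w)|^2$ into the exact variance formula (\ref{V1}) of Theorem 1 yields the first display; making the same substitution in the asymptotic constant (\ref{constant}) gives $c_{n,B}$ and, via (\ref{asymptotic}), the asymptotics $\mathbb{V}[\mathcal{X}_{B,n}(D(i,R))]\sim c_{n,B}/(1-R^2)$. For the final inequality I would apply the second conclusion of Theorem 2 with $\Omega=D(i,R)$, namely $\mathbb{V}[\mathcal{X}_{B,n}(D(i,R))]\le C_\psi|D(i,R)|_h$, and insert $C_\psi=\tfrac{4\pi}{2(B-n)-1}$.

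At the level of the Corollary there is therefore no real obstacle: everything reduces to Theorems 1 and 2 together with the trivial observation $\pi(i)=\mathrm{Id}$. The genuine work lives in Proposition 1 of the Appendix, which must (i) evaluate the inner product (\ref{kernelF}) for the Laguerre profile (\ref{mother}) and recognize it as the Maass-Landau kernel $K_{n,B}$, and (ii) compute $C_{\widetilde{\psi_n^\alpha}}$ by evaluating $2\pi\|\mathcal{F}\widetilde{\psi_n^\alpha}\|_{L^2(\mathbb{R}^+,t^{-1}dt)}^2$, which after normalizing by $\|\psi_n^\alpha\|_2^2$ amounts to a Laguerre integral of the form $\int_0^\infty t^{\alpha-1}e^{-t}(L_n^\alpha(t))^2\,dt$. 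I expect step (i)---matching a hypergeometric expression to the closed-form reproducing kernel of the Maass Laplacian eigenspace---to be the main obstacle, while the Beta-type evaluation in (ii) is routine.
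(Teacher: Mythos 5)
Your proposal is correct and takes essentially the same route as the paper: the Corollary is obtained there, too, by specializing Theorems 1 and 2 to $\psi=\widetilde{\psi_n^{2(B-n)-1}}$, using Proposition 1 of the Appendix for the identity $K_{\widetilde{\psi_n^{2(B-n)-1}}}=K_{n,B}$ and the computed admissibility constant $C_{\widetilde{\psi_n^{2(B-n)-1}}}=\tfrac{4\pi}{2(B-n)-1}$, with your observation $\pi(i)=\mathrm{Id}$, hence $|W_\psi\psi(w)|=|K_{n,B}(i,w)|$, used implicitly in the paper's substitution. (Minor remark: the Appendix's final line states $C_{\widetilde{\psi_n^{2(B-n)-1}}}=\tfrac{2}{2(B-n)-1}$, which is a typo inconsistent with the general formula $\tfrac{4\pi}{\alpha}$; the value $\tfrac{4\pi}{2(B-n)-1}$ you use, and which the Corollary states, is the correct one.)
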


\section{Appendix}

\subsection{Reproducing kernels of special affine ensembles}

Let $\alpha >-1$. For $n=0,1,2,\dots $ we define the normalized functions $%
\widetilde{\psi _{n}^{\alpha }}$ such that $\left\Vert \widetilde{\psi
_{n}^{\alpha }}\right\Vert _{H^{2}\left( \mathbb{C}^{+}\right) }=1$: 
\begin{equation*}
(\mathcal{F}\widetilde{\psi _{n}^{\alpha }})(t)=\sqrt{\frac{2^{\alpha +1}n!}{%
\Gamma (n+\alpha +1)}}t^{\frac{\alpha }{2}}e^{-t}L_{n}^{\alpha }(2t),\quad
t>0.
\end{equation*}

\begin{proposition}
The reproducing kernel of $\mathcal{W}_{\widetilde{\psi _{n}^{\alpha }}}$ is
given by 
\begin{eqnarray*}
K_{\widetilde{\psi _{n}^{\alpha }}}(z,w) &=&\frac{(-1)^{n}\Gamma (n+1+\alpha
)}{n!\Gamma (1+\alpha )}\left( \frac{4\,\mathrm{Im}\,z\,\mathrm{Im}\,w}{|z-%
\overline{w}|^{2}}\right) ^{\frac{\alpha +1}{2}}\left( \frac{\overline{z}-w}{%
\overline{w}-z}\right) ^{\frac{\alpha +1}{2}+n} \\
&&\times F\left[ 
\begin{array}{c}
n+\alpha +1,-n \\ 
1+\alpha%
\end{array}%
;\frac{4\,\mathrm{Im}\,z\,\mathrm{Im}\,w}{|z-\overline{w}|^{2}}\right] \text{%
,}
\end{eqnarray*}%
where $F=_{2}F_{1}$ denotes the hypergeometric function. Setting $\alpha
=2(B-n)-1$ we obtain 
\begin{eqnarray*}
&&K_{\widetilde{\psi _{n}^{2(B-n)-1}}}(z,w) \\
&=&\frac{(-1)^{n}\Gamma (2B-n)}{n!\Gamma (2B-2n)}\left( \frac{4\,\mathrm{Im}%
\,z\,\mathrm{Im}\,w}{|z-\overline{w}|^{2}}\right) ^{B-n}\left( \frac{%
\overline{z}-w}{\overline{w}-z}\right) ^{B}F\left[ 
\begin{array}{c}
2B-n,-n \\ 
2B-2n%
\end{array}%
;\frac{4\,\mathrm{Im}\,z\,\mathrm{Im}\,w}{|z-\overline{w}|^{2}}\right]
\end{eqnarray*}
\end{proposition}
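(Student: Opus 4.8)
The plan is to compute the kernel directly from its Fourier representation \eqref{kernelF}. By \eqref{affineKernel} and \eqref{kernelF}, with $z=x+is$ and $w=x'+is'$, one has $K_{\widetilde{\psi_n^\alpha}}(z,w)=\langle\pi(w)\widetilde{\psi_n^\alpha},\pi(z)\widetilde{\psi_n^\alpha}\rangle$, and inserting $(\mathcal{F}\widetilde{\psi_n^\alpha})(t)=c_{n,\alpha}\,t^{\alpha/2}e^{-t}L_n^\alpha(2t)$ with $c_{n,\alpha}^2=2^{\alpha+1}n!/\Gamma(n+\alpha+1)$, and using that $\mathcal{F}\widetilde{\psi_n^\alpha}$ is real, the formula collapses (since $\mathrm{Re}\,\beta=s+s'>0$ guarantees absolute convergence) to
\[
K_{\widetilde{\psi_n^\alpha}}(z,w)
=c_{n,\alpha}^2\,(ss')^{\frac{\alpha+1}{2}}
\int_0^\infty \xi^\alpha e^{-\beta\xi}\,L_n^\alpha(2s\xi)\,L_n^\alpha(2s'\xi)\,d\xi,
\qquad \beta:=(s+s')-i(x-x').
\]
The first task is then geometric bookkeeping: I would record $\beta=-i(z-\overline{w})$, $\overline{\beta}/\beta=(\overline{z}-w)/(\overline{w}-z)$, $|\beta|^2=|z-\overline{w}|^2$ and $4ss'=4\,\mathrm{Im}\,z\,\mathrm{Im}\,w$, so that the target argument $v:=4\,\mathrm{Im}\,z\,\mathrm{Im}\,w/|z-\overline{w}|^2$ equals $4ss'/|\beta|^2$, while $(\beta-2s)(\beta-2s')=-\big[(s-s')^2+(x-x')^2\big]=(v-1)|\beta|^2$.

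The core of the proof, and the step I expect to be the main obstacle, is the evaluation of the bilinear Laguerre integral. I would use the standard tabulated identity
\[
\int_0^\infty \xi^\alpha e^{-\beta\xi}\,L_n^\alpha(a\xi)\,L_n^\alpha(b\xi)\,d\xi
=\frac{\Gamma(n+\alpha+1)^2}{(n!)^2\,\Gamma(\alpha+1)}\,
\frac{(\beta-a)^n(\beta-b)^n}{\beta^{\alpha+2n+1}}\;
{}_2F_1\!\left(\begin{array}{c}-n,-n\\ \alpha+1\end{array};\frac{ab}{(\beta-a)(\beta-b)}\right),
\]
valid for $\mathrm{Re}\,\beta>0$, applied with $a=2s$, $b=2s'$. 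If a clean reference is unavailable, I would instead expand each factor by the finite series $L_n^\alpha(x)=\sum_k\binom{n+\alpha}{n-k}(-x)^k/k!$, integrate term by term against $\xi^\alpha e^{-\beta\xi}$ (each monomial contributing $\Gamma(\alpha+j+k+1)/\beta^{\alpha+j+k+1}$), and collapse the resulting double sum to the single terminating ${}_2F_1$ via a Chu--Vandermonde summation; the cases $n=0,1$ can be verified directly as a sanity check.

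It remains to match this with the stated form. Writing $u:=ab/((\beta-a)(\beta-b))=v/(v-1)$, I would invoke the Pfaff transformation in the form
\[
{}_2F_1\!\left(\begin{array}{c}-n,-n\\ \alpha+1\end{array};\frac{v}{v-1}\right)
=(1-v)^{-n}\;
{}_2F_1\!\left(\begin{array}{c}-n,\,n+\alpha+1\\ \alpha+1\end{array};v\right),
\]
which turns the terminating series into the desired ${}_2F_1(-n,n+\alpha+1;\alpha+1;v)$ at the cost of a factor $(1-v)^{-n}$. Since $(\beta-2s)^n(\beta-2s')^n=(-1)^n(1-v)^n(\beta\overline{\beta})^n$, this factor cancels and the prefactor $\big[(\beta-2s)(\beta-2s')\big]^n/\beta^{\alpha+2n+1}$ collapses to $(-1)^n\overline{\beta}^{\,n}/\beta^{\alpha+n+1}$. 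Regrouping the powers of $\beta,\overline{\beta}$ against $(4ss')^{(\alpha+1)/2}=v^{(\alpha+1)/2}(\beta\overline{\beta})^{(\alpha+1)/2}$ produces exactly $v^{(\alpha+1)/2}(\overline{\beta}/\beta)^{(\alpha+1)/2+n}$, and collecting $c_{n,\alpha}^2$ with $\Gamma(n+\alpha+1)^2/((n!)^2\Gamma(\alpha+1))$ leaves the constant $(-1)^n\Gamma(n+\alpha+1)/(n!\,\Gamma(\alpha+1))$, which is the first asserted formula (note $K_{\widetilde{\psi_n^\alpha}}(z,z)=1$, since $v=1$ and $\overline{\beta}/\beta=1$ there). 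The second formula is then immediate: substituting $\alpha=2(B-n)-1$ gives $(\alpha+1)/2=B-n$, $(\alpha+1)/2+n=B$, $n+\alpha+1=2B-n$ and $\alpha+1=2B-2n$, so the displayed specialization follows verbatim.
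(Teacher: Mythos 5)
Your proposal is correct and takes essentially the same route as the paper: both reduce $K_{\widetilde{\psi_n^{\alpha}}}(z,w)$ via (\ref{kernelF}) to the bilinear Laguerre--exponential integral $\int_0^\infty \xi^\alpha e^{-\beta\xi}L_n^\alpha(2s\xi)L_n^\alpha(2s'\xi)\,d\xi$ and evaluate it by a tabulated closed form before tidying the complex prefactors. The only difference is cosmetic: the paper cites Gradshteyn--Ryzhik 7.414(13), obtaining a Jacobi polynomial $P_n^{(\alpha,0)}$ which it converts to the terminating $_2F_1$ at argument $v$ through (\ref{Jacobi}), while you cite the equivalent table entry producing $_2F_1\bigl(-n,-n;\alpha+1;v/(v-1)\bigr)$ and reach the same $_2F_1$ by a Pfaff transformation --- the two table entries are themselves related by precisely that Pfaff transformation, so the arguments match step for step.
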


\begin{proof}
For simplification write $z=x+is,w=x^{\prime }+is^{\prime }\in \mathbb{C^{+}}
$. Formula (\ref{kernelF}) gives:%
\begin{eqnarray*}
K_{\widetilde{\psi _{n}^{\alpha }}}(z,w) &=&\langle \pi (w)\widetilde{\psi
_{n}^{\alpha }},\pi (z)\widetilde{\psi _{n}^{\alpha }}\rangle _{H^{2}(%
\mathbb{C}^{+})}=\left\langle \widehat{\pi (w)\widetilde{\psi _{n}^{\alpha }}%
},\widehat{\pi (z)\widetilde{\psi _{n}^{\alpha }}}\right\rangle _{L^{2}(%
\mathbb{R}^{+})} \\
&=&\frac{2^{\alpha +1}n!}{\Gamma (n+\alpha +1)}s^{\frac{1}{2}}s^{\prime 
\frac{1}{2}}\int_{0}^{\infty }e^{-ix^{\prime }t}(ts^{\prime })^{\frac{\alpha 
}{2}}e^{-s^{\prime }t}L_{n}^{\alpha }(2s^{\prime }t)e^{ixt}(ts)^{\frac{%
\alpha }{2}}e^{-st}L_{n}^{\alpha }(2st)dt \\
&=&\frac{n!}{\Gamma (n+\alpha +1)}s^{\frac{\alpha }{2}+\frac{1}{2}}s^{\prime 
\frac{\alpha }{2}+\frac{1}{2}}\int_{0}^{\infty }t^{\alpha }e^{-t\Big(\frac{1%
}{2}i(x^{\prime }-x)+\frac{s^{\prime }+s}{2}\Big)}L_{n}^{\alpha }(s^{\prime
}t)L_{n}^{\alpha }(st)dt\text{.}
\end{eqnarray*}%
To compute the integral we will use the following integral formula \cite[p.
810, 7.414 (13)]{tables}:%
\begin{equation*}
\int_{0}^{\infty }e^{-t(k+\frac{a_{1}+a_{2}}{2})}t^{\alpha }L_{n}^{\alpha
}(a_{1}t)L_{n}^{\alpha }(a_{2}t)dt=\frac{\Gamma (1+\alpha +n)}{%
b_{0}^{1+\alpha +n}}\frac{b_{2}^{n}}{n!}P_{n}^{(\alpha ,0)}\left( \frac{%
b_{1}^{2}}{b_{0}b_{2}}\right) ,
\end{equation*}%
where 
\begin{equation*}
b_{0}=k+\frac{a_{1}+a_{2}}{2},b_{2}=k-\frac{a_{1}+a_{2}}{2}%
,b_{1}^{2}=b_{0}b_{2}+2a_{1}a_{2},\mathrm{Re}\,\alpha >-1,\mathrm{Re}%
\,b_{0}>0
\end{equation*}%
and $P_{n}^{(\alpha ,\beta )}$ denotes the Jacobi polynomial. Setting 
\begin{equation*}
b_{0}=\frac{1}{2}i(x^{\prime }-x)+s^{\prime }+s=\frac{1}{2}i(x^{\prime
}-x-is^{\prime }-is)=\frac{1}{2}i(x^{\prime }-is^{\prime }-(x+is))=\frac{1}{2%
}i(\overline{w}-z),
\end{equation*}%
\begin{equation*}
b_{2}=\frac{1}{2}i(x^{\prime }-x)-s^{\prime }-s=\frac{1}{2}i(x^{\prime
}-x+is^{\prime }+is)=\frac{1}{2}i(x^{\prime }+is^{\prime }-(x-is))=\frac{1}{2%
}i(w-\bar{z})
\end{equation*}%
and 
\begin{equation*}
b_{1}^{2}=\frac{1}{4}(-|z-\bar{w}|^{2}+8s^{\prime }s).
\end{equation*}%
gives%
\begin{equation*}
K_{\widetilde{\psi _{n}^{\alpha }}}(z,w)=(ss^{\prime })^{\frac{\alpha }{2}+%
\frac{1}{2}}\Big(\frac{2}{i}\Big)^{\alpha +1}\frac{(w-\overline{z})^{n}}{(%
\overline{w}-z)^{\alpha +n+1}}\cdot P_{n}^{(\alpha ,0)}\left( 1-\frac{%
8s^{\prime }s}{|z-\overline{w}|^{2}}\right) \text{,}
\end{equation*}%
where the Jacobi polynomial is defined as 
\begin{equation}
P_{n}^{(\alpha ,\beta )}(x)=\frac{\Gamma (n+1+\alpha )}{n!\Gamma (1+\alpha )}%
F\left[ 
\begin{array}{c}
n+\alpha +\beta +1,-n \\ 
1+\alpha%
\end{array}%
;\frac{1-x}{2}\right] \text{.}  \label{Jacobi}
\end{equation}%
Thus, 
\begin{eqnarray*}
&&K_{\widetilde{\psi _{n}^{\alpha }}}(z,w) \\
&=&(4ss^{\prime })^{\frac{\alpha }{2}+\frac{1}{2}}\frac{1}{i^{\alpha +1}}%
\frac{\Gamma (n+\alpha +1)}{n!\Gamma (\alpha +1)}\frac{(w-\overline{z})^{n}}{%
(\overline{w}-z)^{\alpha +n+1}}F\left[ 
\begin{array}{c}
n+1+\alpha ,-n \\ 
1+\alpha%
\end{array}%
;\frac{4s^{\prime }s}{|z-\overline{w}|^{2}}\right] \text{.}
\end{eqnarray*}%
Next, notice that 
\begin{eqnarray*}
&&\frac{1}{(|z-\overline{w}|^{2})^{\frac{\alpha }{2}+\frac{1}{2}}}\frac{(w-%
\overline{z})^{n}(z-\overline{w})^{\frac{\alpha }{2}+\frac{1}{2}}(\overline{z%
}-w)^{\frac{\alpha }{2}+\frac{1}{2}}}{(\overline{w}-z)^{\alpha +n+1}} \\
&=&\frac{1}{(|z-\overline{w}|^{2})^{\frac{\alpha }{2}+\frac{1}{2}}}\frac{%
(-1)^{n}(\overline{z}-w)^{n+\frac{\alpha }{2}+\frac{1}{2}}(-1)^{\frac{\alpha 
}{2}+\frac{1}{2}}(\overline{w}-z)^{\frac{\alpha }{2}+\frac{1}{2}}}{(%
\overline{w}-z)^{\alpha +n+1}}\text{.}
\end{eqnarray*}%
Hence, 
\begin{eqnarray*}
&&K_{\widetilde{\psi _{n}^{\alpha }}}(z,w) \\
&=&\frac{(-1)^{n}\Gamma (n+\alpha +1)}{n!\Gamma (\alpha +1)}\left( \frac{%
4s^{\prime }s}{|z-\overline{w}|^{2}}\right) ^{\frac{\alpha }{2}+\frac{1}{2}%
}\left( \frac{\overline{z}-w}{\overline{w}-z}\right) ^{\frac{\alpha }{2}+%
\frac{1}{2}+n}F\left[ 
\begin{array}{c}
n+1+\alpha ,-n \\ 
1+\alpha%
\end{array}%
;\frac{4s^{\prime }s}{|z-\overline{w}|^{2}}\right] \text{.}
\end{eqnarray*}
\end{proof}

\subsection{\textbf{Norms and admissible constants}}

The orthogonality relation%
\begin{equation}
\int_{0}^{+\infty }L_{n}^{\alpha }(t)L_{m}^{\alpha }(t)t^{\alpha }e^{-t}dt=%
\frac{\Gamma (n+\alpha +1)}{n!}\delta _{n,m}  \label{ortogonal}
\end{equation}%
written as%
\begin{equation*}
\int_{0}^{+\infty }t^{\frac{\alpha }{2}}e^{-t}L_{n}^{\alpha }(2t)t^{\frac{%
\alpha }{2}}e^{-t}L_{m}^{\alpha }(2t)dt=\frac{\Gamma (n+\alpha +1)}{%
2^{\alpha +1}n!}\delta _{n,m}
\end{equation*}%
shows that $\mathcal{F}\widetilde{\psi _{n}^{\alpha }}\in L^{2}(\mathbb{R}%
^{+})$. Hence, we observe that $\widetilde{\psi _{n}^{\alpha }}\in H^{2}(%
\mathbb{C}^{+})$ and 
\begin{equation*}
\left\Vert \widetilde{\psi _{n}^{\alpha }}\right\Vert _{H^{2}(\mathbb{C}%
^{+})}^{2}=\left\Vert \mathcal{F}\widetilde{\psi _{n}^{\alpha }}\right\Vert
_{L^{2}(%
\mathbb{R}
^{+})}^{2}=1\text{.}
\end{equation*}%
Next, we calculate the admissibility constant $C_{\widetilde{\psi
_{n}^{\alpha }}}$ . The formula \cite[(10)]{Sriv} gives 
\begin{equation*}
\int_{0}^{\infty }t^{\alpha -1}e^{-t}L_{n}^{\alpha }(2t)^{2}dt=\frac{\Gamma
(n+\alpha +1)}{2^{\alpha }n!\alpha }
\end{equation*}%
\begin{equation*}
C_{\widetilde{\psi _{n}^{\alpha }}}=2\pi \Vert \mathcal{F}\widetilde{\psi
_{n}^{\alpha }}\Vert _{L^{2}(\mathbb{R}^{+},t^{-1})}=\frac{\pi 2^{\alpha
+2}n!}{\Gamma (n+\alpha +1)}\int_{0}^{\infty }\left( t^{\alpha
/2}e^{-t}L_{n}^{\alpha }(2t)\right) ^{2}\frac{dt}{t}=\frac{4\pi }{\alpha }%
\text{.}
\end{equation*}%
\noindent Consequently, 
\begin{equation*}
C_{\widetilde{\psi _{n}^{2(B-n)-1}}}=\frac{2}{2(B-n)-1}\text{.}
\end{equation*}

\end{document}